\newcommand{\vv}{\vert \vert}
\newcommand{\R}{\mathbb{R}}
\newcommand{\Q}{\mathbb{Q}}
\newcommand{\N}{\mathbb{N}}
\newcommand{\Z}{\mathbb{Z}}
\newcommand{\C}{\mathbb{C}}
\newcommand{\Symp}{\operatorname{Symp}}
\newcommand{\supp}{\operatorname{supp}}
\newcommand{\Ham}{\operatorname{Ham}}
\newcommand{\tHam}{\widetilde{\operatorname{Ham}}}
\newcommand{\Fix}{\operatorname{Fix}}
\newcommand{\Per}{\operatorname{Per}}
\newcommand{\Id}{\operatorname{Id}}
\newcommand{\bH}{\bar{H}}
\newcommand{\tV}{\tilde{\varphi}}
\newcommand{\Spec}{\operatorname{Spec}}
\newcommand{\scon}{\overset{C^{\infty}}{\longrightarrow}}
\newtheorem{theorem}{Theorem}
\newtheorem{corollary}[theorem]{Corollary}
\newtheorem*{question*}{Question}
\newtheorem{lemma}[theorem]{Lemma}
\newtheorem{proposition}[theorem]{Proposition}
\newtheorem{claim}[theorem]{Claim}
\newtheorem*{lemma*}{Lemma}
\newtheorem*{theorem*}{Theorem}
\newtheorem*{remark*}{Remark}
\newtheorem*{definition*}{Definition}
\newtheorem{remark}[theorem]{Remark}
\theoremstyle{remark}
\newtheorem*{remarks*}{Remarks}
\newtheorem*{conjecture*}{Conjecture}
\theoremstyle{definition}
\newtheorem*{claim*}{Claim}
\newtheorem*{example*}{Examples}
\title{The strong closing lemma and Hamiltonian pseudo-rotations}
\author{Erman \c C\. inel\. i and Sobhan Seyfaddini}
\begin{document}
\date{\today}
\maketitle
\begin{abstract}
    We prove the strong $C^\infty$ closing property, as formulated by Irie, for a class of Hamiltonian diffeomorphisms which includes all pseudo-rotations of $\C P^n$ as well as all Anosov-Katok pseudo-rotations.   
\end{abstract}

\tableofcontents

\section{Introduction}
The main goal of this paper is to establish a variant of the $C^\infty$  closing lemma, 
called the strong closing lemma, formulated by Irie \cite{Irie22}, for certain Hamiltonian diffeomorphisms.

\medskip

The $C^\infty$ closing lemma is one of the most significant open problems in dynamical systems.  In the conservative setting, where the system preserves a symplectic or a volume form, it is often formulated as follows: for any open set $U$, there exists a $C^\infty$-small perturbation of the system which has a periodic point/orbit passing through $U$. Although the $C^\infty$ closing lemma does not hold\footnote{The $C^1$ closing lemma does hold in general settings \cite {Pugh1, Pugh2, Pugh-Robinson}.} in full generality \cite{Herman1, Herman2}, it was recently established for Reeb flows in dimension three \cite{Irie-Reeb}, Hamiltonian diffeomorphisms of surfaces \cite{Asaoka-Irie} and more generally for all area-preserving diffeomorphisms of surfaces \cite{CGPZ, HE, CGPPZ}. These results were obtained via purely low dimensional techniques and the higher dimensional case remains largely open. 

Recently, Irie \cite{Irie22} formulated the \textbf{strong closing property} for contact forms: a contact form $\lambda$ is said to satisfy the strong closing property if for every non-zero function $g\geq 0$ (or $g \leq 0)$ there exists $s\in[0, 1]$ such that the  contact form $(1+sg) \lambda$ has a closed Reeb orbit passing through the support of $g$.  Irie moreover conjectured that this property is satisfied by the Reeb flow on the boundary of an ellipsoid; this  was recently proven by Chaidez, Datta, Prasad and Tanny \cite{CDPT} using contact homology.  Shortly thereafter similar results were obtained by Xue using the techniques of KAM theory \cite{Xue}.  

\subsection{Main Result}
We now introduce the class of Hamiltonian diffeomorphisms for which we establish the strong closing property. Let $(M, \omega)$ be a closed and connected symplectic manifold. Its group of Hamiltonian diffeomorphisms $\Ham(M, \omega)$ admits a remarkable  (bi-invariant) distance called the spectral metric; it is usually denoted by $\gamma$ and it was introduced in the works of  Viterbo, Schwarz and Oh \cite{viterbo92, schwarz, oh}. We say $\psi \in \Ham$ is  \textbf{$\gamma$-rigid}\footnote{Rigid maps are also known as approximate identities; see \cite{Ginzburg-Gurel19}.} if there exists a sequence of natural numbers $k_i \to \infty$ such that $\psi^{k_i} \to \Id$ in the $\gamma$ distance. 

Next we state our main result. Note that we denote by $\varphi_G$ the time-one map of the flow of a Hamiltonian $G$; see Section \ref{sec:hamiltonians} for notations and definitions.

\begin{theorem}
\label{thm:closing}
Every $\gamma$-rigid  Hamiltonian diffeomorphism $\psi$ satisfies the strong closing property: for every non-zero Hamiltonian  $G \ge 0 $ (or $G\le 0$) supported away from periodic points of $\psi$ the composition $\varphi_{G}\psi$  has a periodic point passing through the support of $G$. 
\end{theorem}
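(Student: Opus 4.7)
The strategy is to argue by contradiction: suppose that for every $s \in [0,1]$ the map $\psi_s := \varphi^s_G \psi$ has no periodic point in $\supp G$, and derive a contradiction from the $\gamma$-rigidity of $\psi$ through a parametric spectral-invariant argument.

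\emph{Step 1 (parametric constancy of spectral invariants).} For each iterate $k \geq 1$, view the $s$-family $s \mapsto \psi_s^k$ as a Hamiltonian isotopy in the parameter $s$. A chain-rule computation identifies its generating Hamiltonian as
\[
\tilde G_s(x) \,=\, \sum_{j=0}^{k-1} G\bigl(\psi_s^{-j}(x)\bigr),
\]
whose support is contained in the $\psi_s$-invariant set $\bigcup_{j=0}^{k-1} \psi_s^j(\supp G)$. By the standard derivative formula for spectral invariants (Oh/Usher), $\frac{d}{ds} c(\alpha, \psi_s^k)$ equals the integral of $\tilde G_s$ along a carrier $k$-periodic orbit of $\psi_s$. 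Since every periodic orbit of $\psi_s$ is $\psi_s$-invariant, the contradiction hypothesis (no periodic orbit meets $\supp G$) implies none meets any $\psi_s^j(\supp G)$ either, so the derivative vanishes. Hence $s \mapsto c(\alpha, \psi_s^k)$ is constant for every cohomology class $\alpha$; in particular $c(\alpha, \psi_1^{\pm k}) = c(\alpha, \psi^{\pm k})$.

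\emph{Step 2 (inherited $\gamma$-rigidity of $\psi_1$).} Specializing to $k = k_i$ along the rigidifying sequence and using continuity of $c(\alpha, \cdot)$ with respect to $\gamma$ together with $\psi^{\pm k_i} \to \Id$ in $\gamma$, we obtain $c(\alpha, \psi^{\pm k_i}) \to 0$. Step 1 then yields $c(\alpha, \psi_1^{\pm k_i}) \to 0$, so $\gamma(\psi_1^{k_i}) \to 0$. Hence $\psi_1 = \varphi^1_G \psi$ is also $\gamma$-rigid along the sequence $(k_i)$.

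\emph{Step 3 (contradiction from positivity of $G$).} Decompose
\[
\psi_1^{k_i} \,=\, \Phi_{k_i} \cdot \psi^{k_i}, \qquad \Phi_{k_i} \,:=\, \prod_{j=0}^{k_i - 1} \psi^j \varphi^1_G \psi^{-j}.
\]
The bi-invariance and triangle inequality for $\gamma$ combined with Step 2 give $\gamma(\Phi_{k_i}) \to 0$. On the other hand, $\Phi_{k_i}$ is the time-one map of a concatenated non-negative Hamiltonian whose time-integrated value at $x$ is the Birkhoff sum $\sum_{j=0}^{k_i-1} G(\psi^{-j}(x))$, supported on a large $\psi$-orbit of $\supp G$. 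A quantitative lower bound — exploiting the rationality of $(M,\omega)$ (which ensures a positive spectral gap) together with the non-negativity and non-triviality of $G$ accumulated through many conjugates — should force $c(\mathbf 1, \Phi_{k_i}) \geq \delta(G) > 0$ uniformly in $i$, contradicting $\gamma(\Phi_{k_i}) \to 0$.

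\emph{Main obstacle.} The crux is Step 3: extracting a uniform positive lower bound on $c(\mathbf 1, \Phi_{k_i})$. Standard monotonicity only yields $c \geq \int_0^1 \min H\,dt = 0$ for non-negative Hamiltonians with proper support, which is too weak. The required refinement must combine the discreteness (modulo $\omega(\pi_2 M)$) of the action spectrum in the rational case with a quantitative accumulation estimate for iterated conjugates of a positive-Hamiltonian map, and it is precisely at this point that the $\gamma$-rigidity of $\psi$ must play its sharpest role.
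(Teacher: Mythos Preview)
Your Step 1 is essentially correct and matches the paper's core mechanism (the paper obtains the same constancy via spectrality and the nowhere-dense spectrum rather than the derivative formula, but these are interchangeable). The real problems lie in Steps 2 and 3.

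\textbf{The gap in Step 2.} The spectral invariant $c$ is defined on Hamiltonians (equivalently, on $\tHam$ after mean-normalization), not on $\Ham$. The hypothesis $\gamma(\psi^{k_i})\to 0$ says only that $\inf_{\varphi_F=\psi^{k_i}}\bigl(c(F)+c(\bar F)\bigr)\to 0$; it gives no control over $c$ of the \emph{particular} lift of $\psi^{k_i}$ singled out by your $s$-family. So ``continuity of $c(\alpha,\cdot)$ with respect to $\gamma$'' is not a valid step, and you cannot conclude that $\psi_1$ is $\gamma$-rigid this way. The same lift ambiguity contaminates Step 3: even if you knew $\gamma(\Phi_{k_i})\to 0$, this would not force $c(\tilde G)\to 0$ for your specific generating Hamiltonian $\tilde G$.

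\textbf{How the paper closes both gaps at once.} The paper does not pass through $\gamma$-rigidity of $\psi_1$ or through $\gamma(\Phi_{k_i})$ at all. From the Step-1 conclusion $c(\tilde G\sharp F)=c(F)$ for \emph{every} $F$ with $\varphi_F=\psi^k$, the triangle inequality gives
\[
c(\tilde G\sharp F\sharp \bar F)\;\le\; c(\tilde G\sharp F)+c(\bar F)\;=\;c(F)+c(\bar F)\;=\;\gamma(F),
\]
and homotopy invariance collapses the left side to $c(\tilde G)$. Taking the infimum over $F$ yields $c(\tilde G)\le \gamma(\psi^k)$. Now the ``uniform lower bound'' you were seeking in Step 3 is simply monotonicity: since $G\ge 0$, a reparametrized copy of $G$ sits below $\tilde G$, whence $c(G)\le c(\tilde G)\le \gamma(\psi^k)$ for all $k$. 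Along the rigidifying subsequence this forces $c(G)\le 0$, and then the non-degeneracy of $\gamma$ (applied to any $0\le H\le G$) gives the contradiction. In short, your detour through $\psi_1$ and $\Phi_{k_i}$ is both unnecessary and the source of the gap; the direct inequality $c(G)\le\gamma(\psi^k)$ is the missing idea.
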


In thinking about the above theorem, it is helpful to keep in mind that the ultimate goal here is to show that there exists {\bf some} $C^\infty$ small perturbation which creates a periodic point in a prescribed location.  According to the theorem, for $\gamma$-rigid maps, one can achieve this goal  via {\bf any} local perturbation by a positive  Hamiltonian.

\begin{remark}\label{rmk:support-hypothesis}
The conclusion of Theorem \ref{thm:closing} does not hold without the assumption that $G\ge 0$ (or $G\le 0)$.  We explain this below in Section \ref{sec:misc}.
\end{remark}

\subsection{Examples of $\gamma$-rigid maps: rotations and pseudo-rotations.}

Suppose that $(M, \omega)$ admits a smooth Hamiltonian torus action $R$, e.g.\ $\C P^n$, products of $\C P^n$, and more generally toric symplectic manifolds.  We will refer to any element of the torus action as a \textit{rotation}. Any rotation is $\gamma$-rigid.   Indeed, the iterates of a rotation $C^\infty$ accumulate at the $\Id$ and this implies $\gamma$-rigidity (as $\gamma$ is known to be continuous in $C^\infty$).

\emph{Pseudo-rotations} yield more interesting examples of $\gamma$-rigid maps.  Recall that a Hamiltonian diffeomorphism $\varphi \in \Ham(M, \omega)$ is said to be a  (Hamiltonian) pseudo-rotation if it has finitely many periodic points.\footnote{There exist several working definitions of Hamiltonian pseudo-rotations in the literature; see  \cite[Def.\ 1.1]{CGG19b} and the discussion therein.} Such diffeomorphisms have been of great interest in  dynamical systems and symplectic topology; see, for example, \cite{Anosov-Katok, Fathi-Herman, Fayad-Katok, BCL04, BCL06, Bramham15a, Bramham15b, LeCalvez16, AFLXZ, Ginzburg-Gurel18a,  Ginzburg-Gurel18b, Ginzburg-Gurel19, CGG19a, CGG19b, Shelukhin19}.   Below we discuss some classes of pseudo-rotations for which $\gamma$-rigidity has been established.

\medskip
\subsection*{Anosov-Katok pseudo-rotations.}   Suppose that $(M, \omega)$ carries a Hamiltonian $S^1$-action $R$ whose fixed point set $\Fix(R)$ is finite\footnote{In practice, to construct interesting examples of AKPRs one needs the action to satisfy additional requirements: in addition to $\Fix(R)$ being finite, the action must also be locally free in $M\setminus \Fix(R)$.  Moreover, the conjugating symplectomorphisms $h_k$ are taken to coincide with the identity in a neighborhood $U_k$ of $\Fix(R)$.}, e.g. $\C P^n$, products of $\C P^n$, and more generally toric symplectic manifolds.  We call a pseudo-rotation $\varphi$ an \emph{Anosov-Katok pseudo-rotation} (abbreviated \emph{AKPR}) if there exist sequences $h_k \in \Symp (M,\omega)$ and $\alpha_k \in \Q / \Z$ such that $h_k^{-1} R_{\alpha_k} h_k \scon  \varphi.$

\medskip
 Every AKPR is $\gamma$-rigid; see \cite[Corol.\ 15]{Jok-Sey}.  
 \begin{corollary}
\label{corol:ak}
Every Anosov-Katok pseudo-rotation satisfies the strong closing property.
\end{corollary}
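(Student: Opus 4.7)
The plan is to derive this as an immediate consequence of Theorem~\ref{thm:closing}, since the preceding paragraph has already recalled, citing \cite[Corol.~15]{Jok-Sey}, that every Anosov-Katok pseudo-rotation is $\gamma$-rigid. Given an AKPR $\varphi$ on $(M,\omega)$, $\gamma$-rigidity supplies a sequence $k_i \to \infty$ with $\varphi^{k_i} \to \Id$ in $\gamma$, and Theorem~\ref{thm:closing} then directly delivers the strong closing property. One should also verify the standing hypothesis of Theorem~\ref{thm:closing} -- rationality of $(M,\omega)$ -- but this is automatic in the natural settings in which AKPRs are constructed, namely $\C P^n$, products of $\C P^n$, and more generally toric symplectic manifolds.

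For completeness I would include a brief reminder of why AKPRs are $\gamma$-rigid, since this is the only substantive input beyond Theorem~\ref{thm:closing}. By definition $h_k^{-1} R_{\alpha_k} h_k \scon \varphi$ with $\alpha_k = p_k/q_k \in \Q/\Z$. Each conjugate $h_k^{-1} R_{\alpha_k} h_k$ has order dividing $q_k$, so its $q_k$-th iterate equals $\Id$. Along a diagonal sub-sequence in which the $C^\infty$ approximations are chosen to converge fast enough to survive $q_k$-fold iteration, one obtains $\varphi^{q_{k}} \scon \Id$, and since $\gamma$ is continuous with respect to $C^\infty$ convergence on $\Ham$, this upgrades to $\varphi^{q_k} \to \Id$ in $\gamma$. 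The genuine obstacle -- the careful diagonal construction controlling the $C^\infty$ loss under iteration -- is handled in \cite{Jok-Sey}; the corollary itself requires no new analytic work, only the combination of these two results.
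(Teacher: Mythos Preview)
Your proposal is correct and follows exactly the paper's approach: the corollary is presented as an immediate consequence of Theorem~\ref{thm:closing} together with the $\gamma$-rigidity of AKPRs cited from \cite[Corol.~15]{Jok-Sey}, with no further argument given. One small inaccuracy worth flagging: rationality of $(M,\omega)$ is \emph{not} automatic for general toric symplectic manifolds or for products of $\C P^{n_i}$'s with incommensurable normalizations (e.g.\ $S^2\times S^2$ with area ratio irrational is toric but not rational), so your parenthetical verification overreaches; the corollary should simply be read under the standing rationality hypothesis of Theorem~\ref{thm:closing}, as the paper implicitly does.
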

 
 This yields a rich source of maps with the strong closing property  which we will now examine more closely.\footnote{Observe that in the case of AKPRs the $C^\infty$ closing lemma holds for trivial reasons because every AKPR is the $C^\infty$ limit of finite order maps.}   The simplest AKPRs are the irrational rotations, that is $R_\alpha$  with $\alpha$ an irrational vector. As far as we know, Theorem \ref{thm:closing} is non-trivial even in this simple setting.
 However, in contrast to the setting considered in \cite{CDPT, Xue} where the dynamical system is integrable,  an AKPR could have complicated dynamics. For example, every $(M, \omega)$ with a circle action as above admits an AKPR with a dense orbit.  Moreover, $\C P^n$, products of $\C P^n$, and more generally all toric symplectic manifolds, admit AKPRs with a finite number of ergodic measures; this fact was proven for $\C P^1$ by Fayad-Katok \cite{Fayad-Katok} and was generalized to higher dimensions by Le Roux and the second author in \cite{LeRoux-Sey}. Both constructions rely on the conjugation method of Anosov and Katok \cite{Anosov-Katok}.

\subsection*{Pseudo-rotations of $\C P^n$.} Consider $\C P^n$ equipped with the standard Fubini-Study symplecitc structure.  It is proven in \cite{Ginzburg-Gurel18a} that every pseudo-rotation of $\C P^n$ is $\gamma$-rigid\footnote{In \cite{Ginzburg-Gurel18a}, this is proven for pseudo-rotations of $\C P^n$ with exactly $(n+1)$ periodic points. By Franks' theorem every pseudo-rotation of $\C P^1$ has exactly 2 periodic points. Hypothetically, in higher dimensions, a pseudo-rotation of $\C P^n$ can have more than $n+1$ periodic points (conjecturally, such pseudo-rotations do not exist). However,  in view of the recent results by Shelukhin \cite{Shelukhin22}, the proof of $\gamma$-rigidity given in \cite{Ginzburg-Gurel18a} extends to this hypothetical case as well.}  and so we conclude that every pseudo-rotation of $\C P^n$ satisfies the strong closing property.

The simplest examples of pseudo-rotations on $\C P^n$ are the irrational rotations (see \cite{Ginzburg-Gurel18b} for a detailed study) which have very simple dynamics and, as we explain below, are intimately connected to the dynamics of Reeb flows on ellipsoids.  As alluded to above, a general pseudo-rotation of $\C P^n$ could be far more complicated and, for example, could have exactly $(n+2)$ ergodic measures; see \cite{Fayad-Katok} for $n=1$ and \cite{LeRoux-Sey} for general $n$. We remark that the examples of \cite{Fayad-Katok, LeRoux-Sey} are AKPRs.

\subsection{Miscellaneous remarks and observations.}\label{sec:misc}
We collect here some remarks and observations about various aspects of the above results.  

\subsection*{Reeb flows on ellipsoids and  rotations of $\C P^n$.}
Our interest in the strong closing property for pseudo-rotations was triggered by the recent work of Chaidez, Datta, Prasad and Tanny \cite{CDPT} proving this property for Reeb flows on ellipsoids. There are striking similarities between the dynamics of  ellipsoids and rotations of $\C P^n$  which allow one to relate the closing property in one setting to the other.  

Indeed, it is not difficult to see that for every ellipsoid $E \subset \C^n$, there exists a natural quadratic Hamiltonian $Q: \C P^n \to \R$ such that, up to rescaling, $E$ can be realized as a level set of $Q$.  Moreover, a (1-parameter) perturbation of the contact form on $E$ corresponds to a perturbation of $Q$.  This observation leads to a proof of the strong closing property for ellipsoids using the same property for rotations of $\C P^n$; we will give a brief outline of this in Section \ref{sec:ellipsoids}. 

Conversely, we expect that the strong closing property for ellipsoids can be used to prove the closing property for rotations of $\C P^n$.

\subsection*{$\gamma$-rigid maps.} 
    If $\varphi_i \in \Ham(M_i, \omega_i)$, where $i=1,2$,  are $\gamma$-rigid then $\varphi_1\times \varphi_2 \in \Ham(M_1 \times M_2, \omega_1 \oplus \omega_2)$ is also $\gamma$-rigid.  We do not know of examples of $\gamma$-rigid maps beyond those mentioned in the previous section (i.e.\ rotations, AKPRs and pseudo-rotations of $\C P^n$) and their products.

We remark here that one could obtain a (theoretically) larger collection of $\gamma$-rigid maps by relaxing the $C^\infty$ convergence in the definition of AKPRs to $C^1$ convergence (while still supposing that the limit is a $C^\infty$ Hamiltonian diffeomorphism).  Indeed, the argument given in \cite{Jok-Sey} adapts easily to yield $\gamma$-rigidity for such maps because $\gamma$ is known to be $C^1$ continuous.  In the case of $\C P^n$, one can even further relax the $C^\infty$ convergence to $C^0$ convergence because $\gamma$ is $C^0$ continuous on $\C P^n$ \cite{Shelukhin-18}; we further comment on connections to the $C^0$ topology below.

As remarked in \cite{Ginzburg-Gurel19, CGG22}, one expects that few manifolds admit $\gamma$-rigid maps. For instance, it is conjectured that symplectically aspherical manifolds (e.g. positive genus surfaces) do not admit such maps. Moreover, in the case of manifolds that do admit them, $\gamma$-rigidity is expected to be a $C^\infty$-meager property. It was recently shown in \cite[Cor.\ 2.13]{CGG22} that a $C^\infty$-generic Hamiltonian diffeomorphism of a closed surface is not $\gamma$-rigid.

\subsection*{$C^0$-rigid maps.} $\gamma$-rigidity is intimately related to the notion of $C^0$-rigidity which has been studied in dynamical systems; see for example \cite{Kolev-Peroueme}.  A homeomorphism $\psi$ is called $C^0$-rigid (or sometimes recurrent) if there exists a sequence of natural numbers $k_i \to \infty$ such that $\psi^{k_i} \to \Id$ in the $C^0$ topology. 

It is conjectured that the $\gamma$-distance is continuous in the $C^0$ topology and so we expect that $C^0$-rigidity implies $\gamma$-rigidity. The $C^0$-continuity of $\gamma$  has been an important theme in $C^0$ symplectic topology and has thus far been established for $\R^{2n}$ \cite{viterbo92}, surfaces \cite{SeyC0}, aspherical \cite{BHS} and negative monotone \cite{Kawamoto} symplectic manifolds, and $\C P^n$ \cite{Shelukhin-18}.

 \subsection*{On the hypotheses of Theorem \ref{thm:closing}}
 As promised in Remark \ref{rmk:support-hypothesis}, we present an example here demonstrating that the conclusion of  Theorem \ref{thm:closing} does not hold without the assumption that $G\ge 0$ (or $G\le 0)$. 
 
Let $\psi \in \Ham$ be a pseudo-rotation and $U \subset M$ be an open set such that $\psi(U) \cap U =\emptyset$; thus, $U$ is contained in the complement of $\Fix(\psi)$, the fixed point set of $\psi$. Pick an autonomous Hamiltonian $F$ supported in $U$ and consider the commutator  $\varphi_F \psi \varphi_F^{-1} \psi^{-1}$; it can be written as the time-1 map of the autonomous Hamiltonian $G: = F - F\circ \psi^{-1}$ which is supported in  $U \cup \psi (U)$ (which is in the complement of $\Fix(\psi)$).  Now, note that $\varphi^s_{G}\psi = \varphi^s_{F} \psi \varphi_{F}^{-s}$ which is conjugate to $\psi$, and hence a pseudo-rotation.  Moreover, $\Fix(\varphi^s_{G}\psi)=\Fix(\psi)$, so the conclusion of Theorem \ref{thm:closing} does not hold for $G$.

\subsection*{The closing lemma and generating function theory}

  On general symplectic manifolds the construction of the spectral metric $\gamma$, and hence the proof of the strong closing lemma for $\gamma$ rigid maps, requires the theory of Hamiltonian Floer homology.  In the case of $\C P^n$, however, we expect that Floer theory can be replaced with the finite-dimensional methods of generating function theory which are considered to be simpler.  Indeed,  the spectral metric on $\C P^n$  can be constructed via generating functions (see for example \cite{Allais}) and we believe, moreover, that all the properties of the spectral metric which are necessary for proving Theorem \ref{thm:closing} can also be established via generating functions.
  
  Note that, in view of the above discussion on Reeb flows on ellipsoids, this also yields a generating functions proof of the closing property in that setting.
  
  \subsection*{The closing lemma and Gromov-Witten theory}  Ginzburg and G\"urel's proof \cite{Ginzburg-Gurel18a} of $\gamma$-rigidity for all pseudo-rotations of $\C P^n$ relies heavily on the structure of the quantum product and Gromov-Witten invariants of $\C P^n$.  In the words of Fish and Hofer \cite{Fish-Hofer20}, this provides further evidence for the general belief that ``the validity
of the Hamiltonian $C^\infty$-closing lemma is intimately connected to the existence of a sufficiently rich Gromov-Witten theory of the ambient space.''  We refer the reader to \cite{Fish-Hofer20} for interesting speculations on the closing lemma and its potential connections to the the feral curves theory from \cite{Fish-Hofer-feral}.

 \subsection*{Organization of the paper}
 In Section \ref{sec:prelim}, we introduce some of our notation and recall some basic notions from symplecitc geometry.  In Sections \ref{sec:floer} and \ref{sec:spec}, we review the aspects of the Hamiltonian Floer theory and spectral invariants which are needed for our arguments.  Section \ref{sec:pf} contains the proof of Theorem \ref{thm:closing}.  Finally, in Section \ref{sec:ellipsoids} we sketch a proof of the strong closing property for ellipsoids which is based on Theorem \ref{thm:closing}.

\subsection*{Acknowledgements} We thank Dustin Connery-Grigg, Dan Cristofaro-Gardiner, Sylvain Crovisier, Viktor Ginzburg, Ba\c{s}ak G\"urel, Vincent Humili\`ere, Kei Irie, Patrice Le Calvez, Frédéric Le Roux, Cheuk Yu Mak, Rohil Prasad and  Shira Tanny for helpful comments and conversations. We are particularly grateful to Shaoyun Bai for discussions on the VFC package of \cite{Pardon}.

Both authors are supported by the ERC Starting Grant number 851701.

\section{Preliminaries}
\label{sec:prelim}
In this section we introduce some of our notations and conventions and recall some basic notions from symplectic geometry.

\subsection{Notations and conventions}\label{sec:conventions}
Let $(M^{2n}, \omega)$ be a closed and connected symplectic manifold.  We denote by $C^\infty ( \R/ \Z \times M)$ the set of smooth one-periodic Hamiltonians on $M$. The support $\supp(H) \subset M$ of a Hamiltonian $H \in C^\infty ( \R/ \Z \times M)$ is defined by
$$
\supp(H) := \overline{ \{ x \in M \ \vert \ H(x,t) \neq  0 \ \text{for some} \ t \in \R/\Z\}}.
$$

Given $H \colon \R/ \Z \times M \to \R$, the Hamiltonian vector field $X_H$ of $H$ is given by 
\[
\omega(X_H, \cdot) :=-dH.
\]
The vector field $X_H$ generates a time-dependent flow (or isotopy) $\varphi_H^t$ called the Hamiltonian flow of $H$; we denote its time-one map by $\varphi_H$. The set of all such time-one maps forms the group of Hamiltonian diffeomorphisms $\Ham(M,\omega)$ of $(M, \omega)$. It is a subgroup of the group of symplectomorphisms $\Symp(M, \omega)$ of $(M, \omega)$.

Let $x \colon \R/ \Z \to M$ be a smooth contractible loop in $M$. 
A capping of $x$ is a map $v \colon D^2 \to M$ such that $v\vert_{\partial D^2}=x$. We refer to the pair $(x,v)$ as a {\it capped loop} and say that two capped loops $(x_1, v_1), (x_2, v_2)$ are equivalent, if $x_1 = x_2$ and the sphere $v_1 \sharp (-v_2) \in \ker (\omega) \cap \ker (c_1)$ where $c_1$ is the first Chern class of $TM$. From here on-wards, we only consider capped loops up to this equivalence relation.

Let $\Omega$ be the space consisting of capped loops in $M$.  
Given a Hamiltonian $H$, the associated action functional is a mapping $\mathcal{A}_H: \Omega \rightarrow \R$ defined by
\[
\mathcal{A}_H (x,v) := -\int_v \omega + \int_0^1 H(t, x(t)) dt.
\]
The critical points of $\mathcal{A}_H$ consist of capped loops $(x, v)$ such that $x$ is a contractible  one-periodic orbit of the Hamiltonian vector field $X_H$. We denote this set by $\Omega_H \subset \Omega$. The set of critical values of $\mathcal{A}_H$
$$
\Spec(H):= \{\mathcal{A}_H (x,v)\ \vert \  (x,v) \in \Omega_H \}
$$ 
is called the action spectrum of $H$. 

A one-periodic orbit $x$ of $X_H$ is called non-degenerate if the linearized return map   $$D\varphi_H \colon T_{x(0)}M \to T_{x(0)}M$$  has no eigenvalues equal to one. We say that the Hamiltonian $H$ is non-degenerate if all one-periodic orbits of $X_H$ are non-degenerate. The Conley-Zehnder index $\mu(x,u)$ of a capped non-degenerate orbit $(x,u) \in \Omega_H$ is defined as in \cite{Sa-lecture}. In this paper, $\mu$ is normalized so that $\mu(x, u)=n$ when $x$ is a maximum, with constant capping, of an autonomous  Hamiltonian with small Hessian.

We denote by $\Fix(\varphi)$ the set of fixed points of $\varphi$ and by $\Per(\varphi)$ the set of all periodic points, i.e., 
\[
\Per(\varphi) := \cup_{k \in \N} \Fix(\varphi^k). 
\]
We write $\Fix_c (\varphi) \subset \Fix(\varphi)$ and $\Per_c(\varphi) \subset \Per(\varphi)$ for the subsets formed by the contractible periodic orbits of a lift $\tV \in \tHam (M,\omega)$ of $\varphi$. A fixed point being contractible does not depend on the lift; this fact is a consequence of the Arnold conjecture, see \cite{schwarz}, for example.

\subsection{Remarks on Hamiltonians and related operations}\label{sec:hamiltonians}
Recall that the universal cover of $\Ham(M,\omega)$, which we denote by $  \tHam(M, \omega)$ consists of the set of all Hamiltonian isotopies $\varphi^t_H, t\in [0,1]$ considered up to homotopy relative endpoints.   We denote by $\tV_H \in \tHam(M, \omega)$ the natural lift of $\varphi_H$ (given by the isotopy $\varphi_H^t$)  to the universal cover $\tHam (M, \omega)$.

Up to reparametrization, every Hamiltonian isotopy can be obtained as the flow of a Hamiltonian $H$ which vanishes for values of $t$ close to $0, 1$.  Note that reparametrization does not affect the lift of an isotopy to $\tHam (M, \omega)$.  We will assume throughout the paper that all Hamiltonians vanish near $t=0,1$; this will be helpful in defining the concatenation operation below. 

The concatenation $H_1 \sharp \cdots \sharp H_k$ of the Hamiltonians $H_1, \ldots, H_k$ is given by 
\begin{equation*}\label{eq:sharp}
H_1 \sharp \cdots \sharp H_k(t, x) := kH_i(k (t -(i-1)/k), x)
\end{equation*}
for $t \in [(i-1)/k, i/k]$. Note that $H_i(k (t -(i-1)/k), x)=H_i(k t, x)$ since our Hamiltonians are one-periodic in time.  The Hamiltonian isotopy generated by $H_1 \sharp \cdots \sharp H_k(t, x)$ is simply (a reparametrization of) the concatenation of the Hamiltonian flows of $H_1, \ldots, H_k$. In particular, $\varphi_{H_1 \sharp \cdots \sharp H_k } = \varphi_{H_1} \circ \cdots \circ \varphi_{H_k}$.  Moreover, the two Hamiltonian isotopies  $\varphi^t_{H_1 \sharp \cdots \sharp H_k }$ and  $\varphi^t_{H_1} \circ \cdots \circ \varphi^t_{H_k}$ are homotopic rel endpoints, i.e. 
$$ \tV_{H_1 \sharp \cdots \sharp H_k } = \tV_{H_1} \circ \cdots \circ \tV_{H_k}.$$   

The inverse $\bH$ of the Hamiltonian $H$ is defined by 
\[
\bH(t,x):=-H(t, \varphi^t_H(x))
\]
whose flow is given by $(\varphi^t_H)^{-1}$ and so  $\tV_{\bH}=\tV_H^{-1}$.  

Given $\psi \in \Symp(M, \omega)$, define the pull back Hamiltonian $H\circ \psi$ by 
$$H\circ \psi (t, x) := H(t, \psi(x)).$$  The corresponding Hamiltonian flow is given by
$\varphi^t_{H\circ \psi} := \psi^{-1} \varphi^t_{H} \psi$.  Now, consider the case where $\psi = \varphi_K$; it is not difficult to check that 
\begin{equation*}
\label{eq:conjugation}
  \tV_{H \circ \varphi_K} =  \tV_{\bar{K}} \, \tV_{H} \,  \tV_K.
\end{equation*}

\medskip

We say that the Hamiltonian $H$ is mean normalized if $\int_M H_t \omega^n =0 $ for all $t\in [0,1]$. Let $\Delta(H)$ be the average of $H$:
\begin{equation}\label{eq:Delta}
    \Delta(H):= \int_0^1 \int_M H_t \omega^n dt.
\end{equation}

Observe that $\Delta$ satisfies the following properties 
\begin{itemize}
    \item  $\Delta(H_1 \sharp \cdots \sharp H_k)=\sum \Delta(H_i).$
    \item  $\Delta (H\circ \psi) = \Delta(H)$.
    \item $\Delta(\bar{H}) = - \Delta(H)$.
\end{itemize}

\begin{remark}
\label{rk:conca}
We record here the following observation which will be used below. Introducing a Hamiltonian of the form $K \sharp \bar{K}$ anywhere in $H_1 \sharp \cdots \sharp H_k$, that is, say, for instance
\[
H_1 \sharp \cdots K \sharp \bar{K} \cdots \sharp H_k,
\]
does not change $\tV_{H_1 \sharp \cdots \sharp H_k}$ or $\Delta(H_1 \sharp \cdots \sharp H_k)$. 
\end{remark}

\section{Floer theory}
\label{sec:floer}
In this section we briefly review some aspects of Hamiltonian Floer theory.  To prove  Theorem \ref{thm:closing} in full generality, we need the construction of Hamiltonian Floer theory for arbitrary closed symplectic manifolds which requires the use of virtual techniques such as the polyfold theory of Hofer-Wysocki-Zehnder \cite{HWZ17}  or Pardon's VFC package \cite{Pardon}; here, we will use \cite{Pardon}.  An important feature of the VFC package, on which we will be relying on, is that, by \cite[]{Pardon},   the virtual count and the geometric count coincide when transversality holds (see also \cite[Prop.\ 4.33]{Par19}).  Finally, we remark that the use of virtual techniques could be avoided if we were to limit Theorem \ref{thm:closing} to, for example,  monotone symplectic manifolds where one can rely on the classical transversality techniques \cite{FHS}.

Let $H \colon \R/ \Z \times M \to \R$ be a non-degenerate Hamiltonian and $J$ be an $\omega$-compatible almost complex structure on $TM$. As a vector space the Floer chain-complex is given by 
$$
CF(H,J) := \{\sum_{(x,v) \in \Omega_H} a_{(x,v)} (x,v) \ \vert \ a_{(x,v)} \in \Q;   \ \forall C \in \R, \ \vert\{ a_{(x,v)} \neq 0, \ \mathcal{A}_H(x,v) >C\} \vert < \infty \},
$$
and the differential $\partial_{(H,J)}$ is defined as follows. Let $(x,v)$, $(y,w) \in \Omega_H$ with 
$$ \mu(x,v)-\mu(y,w) =1$$
and consider  solutions $u \colon \R \times S^1 \to M$ to the Floer equation
\begin{equation}
\label{eq:fe}
    \partial_s u(s,t) + J(u(s,t)) (\partial_t u(s,t) - X_{H_t} (u(s,t)))=0
\end{equation}
with asymptotics 
$$
    \lim_{s\to -\infty} u(s,t) = x(t), \  \lim_{s\to \infty} u(s,t) = y(t)
$$
and such that $(y, v\sharp u) = (y, w) \in \Omega_H$.  Note that $\mathbb R$ acts on the set of all $u$ as above by translation: $a\cdot u(s,t) \mapsto u(a +s, t).$

The differential $ \partial_{H,J} : CF(H,J) \rightarrow CF(H,J)$ is defined via the equation $$\partial_{(H,J)} (x, v) = \sum_{(y, w)} a_{(y,w)} (y,w), $$ where the coefficient $a_{(y,w)} \in \mathbb Q$ is given by a certain \emph{virtual count} of  the solutions of \eqref{eq:fe} as above, modulo $\R$ translation; see \cite{Pardon}.  The differential satisfies $\partial^2_{(H,J)}=0$ (see \cite{Pardon}).

As we will now explain, the homology $HF(H,J)$ of the complex $(CF(H,J), \partial_{(H,J)})$ does not depend on $H$ or $J$. Indeed, let $(H^0, J^0)$, $(H^1, J^1)$ be two pairs as above. Consider, for instance, an interpolating homotopy
\begin{align*}
    H^s:= (1-\beta(s))H^0 + \beta(s)H^1
\end{align*}  
between the Hamiltonians $H^i$. Here $\beta \colon \R \to [0,1]$ is a smooth monotone increasing function with $\beta(s)\equiv 0$ for sufficiently small and $\beta(s)\equiv 1$ for sufficient large $s \in\R$. Similarly let $J^s$, $ s\in \R$, be a family of almost complex structures which is eventually constant and equal to $J^i$ for sufficiently small and large $s \in \R$. Similarly to the differential, the continuation map
$$
\Psi_{(H_s, J_s)} \colon CF(H_0, J_0) \to CF(H_1, J_1),
$$
is given by the \emph{virtual count}, as defined in \cite{Pardon}, of the compactified moduli space 
$$
\overline{\mathcal{M}((x,v), (y,w))}
$$ 
consisting of solutions to $u \colon \R \times S^1 \to M$ to the parameterized Floer equation 
\begin{equation}
\label{eq:fe_cont}
    \partial_s u(s,t) + J^s(u(s,t)) (\partial_t u(s,t) - X_{H^s_{t}} (u(s,t)))=0
\end{equation}
with asymptotics $(x^i,v^i) \in \Omega_{H^i}$ that satisfy $\mu((x^0,v^0))=\mu((x^1,v^1))$ together with $(x^1, v^0\sharp u)= (x^1, v^1) \in \Omega_{H^1}$. 

The resulting map $\Psi_{(H^s, J^s)}$ is a chain-map and it induces an isomorphism in homology (see \cite{Pardon}).

\subsection{Energy estimates.} \label{sec:energy}

Below we discuss some energy estimates which will be important to us later in the proofs. The energy $E(u)$ of a solution $u$ to either of the Floer equations \eqref{eq:fe}, \eqref{eq:fe_cont} is given by   $$ E(u) = \int_{-\infty}^{\infty}\int_{S^1} \vert \partial_s u \vert^2 dt ds.$$ 
 In the case of the differential \eqref{eq:fe}, a direct computation shows that 
$$
E(u)=\mathcal{A}_H(x, v)-\mathcal{A}_H(y, w)
$$
for $u \in \mathcal{M}((x,v), (y,w))$ (see, for instance, \cite[Sec. 4.3]{BPS}). It follows that the solution space $\mathcal{M}((x,v), (y,w))$ is empty if $\mathcal{A}_H(x, v) -\mathcal{A}_H(y, w)<0$.  
Let us define $\mathcal{A}_H \colon CF(H, J) \to \R$ by
$$
\mathcal{A}_H (\sum a_{(x,v)} (x, v) := \max \{ \mathcal{A}_H(x,v) \ \vert \ a_{(x,v)} \neq 0\}
$$
where we set  $\mathcal{A}_H(0) = - \infty$. Observe that we have $\mathcal{A}_H(\partial_{(H,J)} (c)) \leq \mathcal{A}_H(c)$ for all $c \in CF(H,J)$. In other words the complex $(CF(H,J), \partial_{(H,J)})$ is filtered by the action $\mathcal{A}_H$. 

\medskip

Next, we consider energy estimates for continuation maps. This time a direct computation gives 
$$
E(u)= \mathcal{A}_{H^0} (x, v) - \mathcal{A}_{H^1}(y, w)  + \int_{-\infty}^{\infty} \int_{S^1} \partial_s H^s (u(s,t)) dt ds
$$
(see \cite[Sec.\ 4.4]{BPS}).   For our proofs, we will be interested in the case where $H^0 \geq H^1$ and $H^s$ is an interpolating homotopy between $H^0, H^1$.  In this case, since $\partial_s H^s \leq 0$, if the compactified moduli space $\overline{\mathcal{M}((x,v), (y,w))} \neq \emptyset $ then 
\begin{equation}\label{eq:action_decreseas}
    \mathcal{A}_{H^1}(y, w) \leq \mathcal{A}_{H^0}(x,v).
\end{equation}
Moreover, the equality can only hold if $(x,v) = (y,w)$ and $ \overline{\mathcal{M}((x,v), (x,v))}$ consists of the constant (in $s$) solution to \eqref{eq:fe_cont}.

\section{Spectral invariants}
\label{sec:spec}

In this section we recall the definition and some relevant properties of spectral invariants following Oh \cite{oh} and Usher \cite{usher}. These invariants were first introduced by Viterbo \cite{viterbo92} in the Euclidean setting, by Schwarz \cite{schwarz} in the setting of aspherical symplectic manifolds (i.e. $\pi_2(M)=0$), and by Oh \cite{oh} on general symplectic manifolds.

Let $\Gamma:= \pi_2(M) / \ker (\omega) \cap \ker (c_1)$ and 
$$
\Lambda_{\Gamma} := \{ \sum_{A \in \Gamma} b_A A \ \vert \ b_{A} \in \Q;   \ \forall C \in \R, \ \vert\{ b_{A} \neq 0, \ \omega(A) <C\} \vert < \infty \}
$$
be the associated Novikov field. For any pair $(H,J)$ as in Section \ref{sec:floer}, the spectral invariant $c(H,J)$ of $H$ is defined by 
$$
c(H) := \inf \{ \mathcal{A}_H (c) \ \vert \ [c] = PSS([M]) \}
$$
where 
\begin{equation}
    \label{eq:PSS}
PSS \colon H_* (M; \Q) \otimes_{\Q} \Lambda_{\Gamma} \to HF (H,J)
\end{equation}
 is the PSS-isomorphism \cite{PSS} and $[M] \in H_*(M)$ is the fundamental class. Roughly speaking, the spectral invariant $c(H)$ of a Hamiltonian $H$ is the minimal action value at which the fundamental class $[M]$ appears in the filtered Hamiltonian Floer homology of $H$. The invariant $c(H,J)$ does not depend on  $J$. Moreover, by the Lipschitz continuity \eqref{eq:cont} below, it extends to all, possibly degenerate, Hamiltonians. We note that the PSS-isomorphism commutes with continuation maps (see Section \ref{sec:floer}).

\begin{remark}
Observe that the Novikov field $\Lambda_{\Gamma}$ acts on $CF(H,J)$ by re-capping. In particular, one can see $CF(H,J)$ as a finite dimensional vector space over $\Lambda_{\Gamma}$ with $\dim_{\Lambda_{\Gamma}} CF(H,J) = \vert \Fix_c (\varphi_H) \vert$. Hence, by \eqref{eq:PSS}, the number of contractible one periodic orbits of $X_H$ is bounded from below by $\dim H_*(M; \Q)$.  
\end{remark}

We now recall some of the properties of the spectral invariant 
$$
c \colon C^\infty(\R/ \Z  \times M) \rightarrow \R,
$$
which we will be used below.  Recall the notation $H\sharp G, \bar{H}, H\circ \psi, \Delta(H), \tV$ from Section \ref{sec:hamiltonians}

 \medskip

 \noindent \textbf{Normalization:} Let $H \equiv 0$ be the zero Hamiltonian. We have  
 \begin{equation}
 \label{eq:normalization}
     c \big(H \big)=0.
 \end{equation}

\medskip

\noindent \textbf{Shift:} Let $a: \R/ \Z  \rightarrow \R$ be smooth.  Then, 
\begin{equation}
\label{eq:shift}
    c \big(H + a \big) = c\big(H  \big) + \int_{0}^1 a(t) \, dt.
\end{equation}

\medskip

\noindent \textbf{Symplectic invariance:} For any $\psi \in \Symp(M, \omega)$ and $H \in C^\infty(\R/\Z \times M)$ we have 
\begin{equation}
\label{eq:symp_inv}
    c \big(H \circ \psi ) = c\big(H  \big).
\end{equation}

\medskip

\noindent \textbf{Homotopy invariance:} Suppose that  $\tV_H=\tV_G$ and that  $\Delta(G)=\Delta(H)$.  Then,  
\begin{equation}
\label{eq:normal}
    c(H)= c\big(G\big).
\end{equation}

\medskip

\noindent \textbf{Monotonicity:}  If $G \leq H$, then  
\begin{equation}
\label{eq:monotone}
    c(G) \leq c(H).
\end{equation}

\medskip

\noindent \textbf{Lipschitz continuity:}  Let $\vv \cdot \vv_\infty$ denote the sup norm on $ C^\infty( \R/ \Z \times M)$.  We have
\begin{equation}
\label{eq:cont}
    \vert c(G) - c(H) \vert \leq  \vv G - H \vv_\infty  
\end{equation}

\medskip

\noindent \textbf{Triangle inequality:}   We have
\begin{equation}
\label{eq:tri}
    c( H\sharp G) \leq c (H) + c (G).
\end{equation}

The proofs of all of the above properties, except for Homotopy invariance which is usually stated differently, can be found in \cite{oh, McDuff-Salamon-Jcurves}.  Usually, for example in the aforementioned references, Homotopy invariance is stated for mean-normalized Hamiltonians: if $\tV_G = \tV_H$ and $G, H$ are both mean normalized, then $c(G) = c(H)$.  We leave it to the reader to check that the mean-normalized version of Homotopy invariance combined with the Shift property above implies our version of Homotopy invariance.  

The spectral norm $\gamma(H)$ of a Hamiltonian $H \colon \R/ \Z \times M \to \R$ is defined by 
\[
\gamma(H) :=c(H) + c(\bH)
\]
and the spectral norm $\gamma(\varphi)$ of $\varphi \in \Ham(M,\omega)$ is given by
\begin{equation}\label{eq:spec-norm}
    \gamma(\varphi) := \inf_{\varphi_H=\varphi} \gamma(H).
\end{equation}

The map  $$\gamma: \Ham(M, \omega) \rightarrow \R$$ is a conjugation invariant norm, i.e.\ it satisfies the following properties
\begin{enumerate}
    \item \label{item:non-deg} $\gamma(\varphi) \geq 0$ with equality iff $\varphi = \Id$.
    \item  $ \gamma(\varphi \psi) \leq \gamma(\varphi) + \gamma(\psi).$ 
    \item  $\gamma(\varphi \psi \varphi^{-1}) = \gamma(\psi)$.
\end{enumerate}

Consequently, induces a genuine bi-invariant metric $d_{\gamma}$ on $\Ham(M,\omega)$ by setting $$ d_{\gamma}(\varphi, \psi) :=\gamma(\varphi \psi^{-1}). $$  Recall that bi-invariance means $d_{\gamma}(\theta \varphi, \theta \psi) = d_{\gamma}( \varphi,  \psi) =  d_{\gamma}( \varphi \theta ,  \psi \theta)$.

\section{Proof of Theorem \ref{thm:closing}}
\label{sec:pf}

In this section we present the proof of Theorem \ref{thm:closing}. We begin with some preliminary lemmas.  

\begin{lemma}
\label{lem:spec}
    Let $\psi \in \Ham(M, \omega)$ and $G \colon \R/ \Z \times M \to \R$ be a non-negative Hamiltonian with $\supp (G) \subset M \setminus \Fix_c(\psi)$. Suppose that $\Fix_c(\varphi_{G} \psi)=\Fix_c(\psi)$. Then, for every Hamiltonian $F$ with $\varphi_F= \psi$, we have $c( G \sharp F )=c(F)$.
\end{lemma}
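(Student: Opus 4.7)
The strategy is to consider the function
$$f(s) := c(G^s \sharp F)$$
and show that it is constant on $[0,1]$. Once this is established, we conclude $c(G^1 \sharp F) = f(1) = f(0) = c(0 \sharp F) = c(F)$, where the last equality follows from Homotopy invariance \eqref{eq:normal}: the isotopy $\tV_{0 \sharp F}$ coincides with $\tV_F$ (up to reparametrization) and $\Delta(0 \sharp F) = \Delta(F)$. Continuity of $f$ is immediate from Lipschitz continuity \eqref{eq:cont} together with the continuous dependence of $G^s$ on $s$. What remains is to pin down the image of $f$.

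By Spectrality \eqref{eq:spec}, $f(s) \in \Spec(G^s \sharp F)$ for every $s$, so it suffices to show that $\Spec(G^s \sharp F) \subseteq \Spec(F)$ for all $s$. The time-1 map of the flow of $G^s \sharp F$ is $\varphi_{G^s}\psi$, whose contractible fixed points form $\Fix_c(\psi)$ by hypothesis. For any such $x$, the support hypothesis $\supp(G^s) \subset M \setminus \Fix_c(\psi)$ forces $G^s(t, x) = 0$ for all $t$, so $x$ is fixed by the entire $G^s$-flow. Consequently, the orbit $t \mapsto x(t)$ of $x$ under the flow of $G^s \sharp F$ consists of a reparametrized $F$-orbit of $x$ together with a constant piece at $x$; as a loop in $M$ it agrees with the $F$-orbit of $x$, so the cappings of the two orbits correspond. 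A direct computation gives
\begin{align*}
\mathcal{A}_{G^s \sharp F}(x, u) &= -\int_u \omega + \int_0^1 (G^s \sharp F)(t, x(t))\, dt \\
&= -\int_u \omega + \int_0^1 F(\tau, \varphi^\tau_F(x))\, d\tau \; = \; \mathcal{A}_F(x, u),
\end{align*}
where the $G^s$ portion of the integrand vanishes because $x(t) = x \notin \supp(G^s)$ along that segment, and the $F$ portion reparametrizes to the integral over $[0,1]$. This establishes the desired inclusion $\Spec(G^s \sharp F) \subseteq \Spec(F)$.

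To conclude, I use that $\Spec(F)$ is a nowhere-dense subset of $\R$: the continuous function $f \colon [0,1] \to \R$ has connected image lying in $\Spec(F)$, and since a nowhere-dense set contains no non-trivial interval, the image must reduce to a single point. Hence $f$ is constant, finishing the argument. The main delicate step I anticipate is the identification of cappings between the two orbits; this will be verified using the fact that the capping equivalence relation is by $\omega$-area and that the two loops coincide in $M$. It is also worth emphasizing that the hypothesis $\Fix_c(\varphi_{G^s}\psi) = \Fix_c(\psi)$ is precisely what rules out the appearance of new contractible 1-periodic orbits (and hence new action values) as $s$ varies.
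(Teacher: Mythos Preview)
Your proof is correct and follows essentially the same approach as the paper: establish that $s \mapsto c(G^s \sharp F)$ is continuous (via Lipschitz continuity), show its values lie in $\Spec(F)$ by identifying the capped $1$-periodic orbits of $G^s \sharp F$ with those of $F$ and checking their actions agree, then use nowhere-density of $\Spec(F)$ to force constancy and evaluate at $s=0$ using Homotopy invariance. The only cosmetic difference is that you state the inclusion $\Spec(G^s \sharp F) \subseteq \Spec(F)$ while the paper states equality, but your argument in fact yields equality as well, and only the inclusion is needed.
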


We remark that our proof below relies on an ``upper triangular" argument which has been used in the past in various contexts; see, for example, \cite{Fil-Wehr, Bai-Xu}.

\begin{proof}
    We first handle the non-degenerate case. Suppose that $\psi$ is non-degenerate and fix $F$ with $\varphi_F=\psi$. By Homotopy invariance \eqref{eq:normal} it suffices to show that $c( G \sharp F )=c(0\sharp F)$. 
    
    Let 
    $$
    \Psi \colon CF(G \sharp F) \to CF(0 \sharp F)
    $$
    be a continuation map induced by an interpolating homotopy, as reviewed in Section \ref{sec:floer}. The underlying almost complex structures will be immaterial for the proof, hence we have dropped them from the notation.

    \begin{claim}\label{cl:acion_equal}
    For all $c \in CF(G \sharp F)$, we have 
    \begin{equation} \label{eq:action_equal}
      \mathcal{A}_{0\sharp F}(\Psi(c)) =\mathcal{A}_{G\sharp F}(c).
    \end{equation}
    \end{claim}
    \begin{proof}[Proof of Claim \ref{cl:acion_equal}]
        Let $(x,v)$ denote a capped one-periodic orbit of $G \sharp F$.  Note that $(x,v)$ is also a capped orbit of the Hamiltonian $0\sharp F$, because $\supp (G) \subset M \setminus \Fix_c(\psi)$.  To prove the claim it suffices to show that $\Psi(x,v)$ is of the form
         \begin{equation*}
          \Psi(x,v) = a_{(x,v) }(x,v) + \sum a_{(y,w)} (y,w),  
          \end{equation*}
        with  $a_{(x,v)} \neq 0$ and $a_{(y,w)} = 0 $ if $\mathcal{A}_{0\sharp F}(y,w) \geq \mathcal{A}_{G\sharp F}(x,v)$ (and $(y,w) \neq (x,v)$). \textcolor{olive}{NEW:} Our argument for proving that $\Psi$ has the above form is similar to the proof of Corollary 4.11 in \cite{Bai-Xu}. 
    
       First, we explain why $a_{(x, v)} \neq 0$.  Since $\supp (G) \subset M \setminus \Fix_c(\psi)$, any interpolating homotopy is constant in a neighborhood of the common (contractible) one-periodic orbits of $X_{G\sharp F}$ and $X_{0\sharp F}$. Hence, for any $J^s$, the constant (in $s$) solution satisfies the parameterized Floer equation \eqref{eq:fe_cont} and so it constitutes an element of  $ \overline{\mathcal{M}((x,v), (x,v))}$. It follows that $\overline{\mathcal{M}\left((x,v), (x,v)\right)}$  consists of a single element, the constant solution; see Equation \eqref{eq:action_decreseas}. 
        Now, by \cite[Lem. 2.2]{Sa-lecture},  the constant solutions to \eqref{eq:fe_cont} are regular in the sense that the associated linearized Floer operator is surjective (here we use the fact that the interpolating homotopy is constant in a neighborhood of the orbits).  Hence, the moduli space $\mathcal{M}\left((x,v), (x,v)\right)$ is cut out transversely and therefore, the virtual count (which defines the continuation map $\Psi$) coincides with the geometric count of elements in $\overline{\mathcal{M}\left((x,v), (x,v)\right)}$ (see \cite[Lem.\ 5.2.6, Prop.\ 7.8.4]{Pardon}) which is $1$.  Hence, $a_{(x, v)} \neq 0$.

        To show that $a_{(y,w)} = 0 $ if $\mathcal{A}_{0\sharp F}(y,w) \geq \mathcal{A}_{G\sharp F}(x,v)$, we reason in a similar manner: we conclude from Equation \eqref{eq:action_decreseas} that $ \overline{\mathcal{M}((x,v), (y,w))} = \emptyset$. Therefore, the virtual and the geometric counts are both zero.  This completes the proof of the claim.
\end{proof}

Continuing with the proof of Lemma \ref{lem:spec}, we first observe the continuation map $\Psi$ is injective at the chain-level and hence a chain-level isomorphism since
    $$
 \dim_{\Lambda_{\Gamma}} CF(G \sharp F) = \dim_{\Lambda_{\Gamma}} CF(0 \sharp F).
    $$

    Knowing that $\Psi$ is a chain-level isomorphism, we conclude from the definition of the spectral invariant $c$ combined with the commutativity of PSS and continuations maps in homology that $$
    c( G \sharp F )=c(0\sharp F).
    $$

    It remains to explain the degenerate case. Fix $F$ with $\varphi_F=\psi$ and let $\tilde{F}$ be a non-degenerate perturbation of $F$ supported away from $\supp(G)$ and such that $\Fix_c(\varphi_{G} \varphi_{\Tilde{F}})=\Fix_c(\varphi_{\tilde{F}})$. By the previous part, we have $c( G \sharp \tilde{F} )=c(\tilde{F})$. Then, by Lipschitz continuity \eqref{eq:cont}, $c( G \sharp F )=c(F)$ holds as well. This completes the proof of the lemma.  
 
    \end{proof}

\begin{lemma}
\label{lem:conj}
    Let $\psi \in \Ham(M, \omega)$ and $G \colon \R/ \Z \times M \to \R$ be a non-negative Hamiltonian with $\supp (G) \subset M \setminus \Per_c(\psi)$. Suppose that $\Per_c(\varphi_{G} \psi)=\Per_c(\psi)$. Then, for all $k \in \N$ and for every Hamiltonian $F$ with $\varphi_F=\psi^k$, we have 
\[
c(F)= c (G\sharp G\circ \psi^{-1} \sharp \cdots \sharp G\circ\psi^{-(k-1)} \sharp F).
\]
\end{lemma}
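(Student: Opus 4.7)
The strategy is to reduce the claim to Lemma \ref{lem:spec}, applied to $\varphi_F = \psi^k$, via a cleverly chosen family of perturbations. Let $G^s$ be a smooth family of Hamiltonians depending continuously on $s \in [0,1]$ (and vanishing near $t=0,1$) with $G^0 \equiv 0$ and $\varphi_{G^s} = \varphi^s_G$ the time-$s$ map of $G$; such a family is obtained by a standard reparametrization. Define
\[
\tilde G^s := G^s \sharp (G^s \circ \psi^{-1}) \sharp \cdots \sharp (G^s \circ \psi^{-(k-1)}),
\]
so that $\tilde G^0 \equiv 0$ and $\tilde G^1$ is precisely the concatenation appearing in the statement. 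If $\tilde G^s$ satisfies the two hypotheses of Lemma \ref{lem:spec} with the role of $\psi$ there played by $\psi^k = \varphi_F$, that lemma immediately gives $c(\tilde G^1 \sharp F) = c(F)$, which is the desired equality.

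The support hypothesis is almost automatic: because $\psi$ preserves $\Per_c(\psi)$ setwise, each factor satisfies $\supp(G^s \circ \psi^{-j}) = \psi^j(\supp G^s) \subset M \setminus \Per_c(\psi)$, so $\supp \tilde G^s \subset M \setminus \Per_c(\psi) \subset M \setminus \Fix_c(\psi^k)$.

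The crux is the fixed-point hypothesis $\Fix_c(\varphi_{\tilde G^s}\psi^k) = \Fix_c(\psi^k)$ for every $s \in [0,1]$. Using $\varphi_{H \circ \phi} = \phi^{-1}\varphi_H \phi$ on each factor yields $\varphi_{G^s \circ \psi^{-j}} = \psi^j \varphi^s_G \psi^{-j}$, and a direct telescoping then gives
\[
\varphi_{\tilde G^s} \circ \psi^k \;=\; \varphi^s_G \cdot (\psi\varphi^s_G \psi^{-1}) \cdots (\psi^{k-1}\varphi^s_G \psi^{-(k-1)}) \cdot \psi^k \;=\; (\varphi^s_G \psi)^k.
\]
Combined with the hypothesis $\Per_c(\varphi^s_G \psi) = \Per_c(\psi)$, this identifies the two fixed-point sets: any contractible fixed point $x$ of $(\varphi^s_G \psi)^k$ belongs to $\Per_c(\varphi^s_G \psi) = \Per_c(\psi)$, hence it (and all its $\psi$-iterates) lies in the complement of $\supp G$; consequently $(\varphi^s_G \psi)^j(x) = \psi^j(x)$ for every $j$, and $x \in \Fix((\varphi^s_G \psi)^k)$ forces $\psi^k(x) = x$. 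The reverse inclusion is analogous and uses only the support condition.

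The main technical point is the telescoping identity together with a careful verification that these set identifications respect the contractibility qualifier, i.e.\ that the natural lifts in $\tHam$ of $\varphi_{\tilde G^s}\psi^k$ and $(\varphi^s_G \psi)^k$ agree so that the hypothesis on $\Per_c(\varphi^s_G\psi)$ can indeed be invoked. Once this is in place, Lemma \ref{lem:spec} delivers the conclusion at once.
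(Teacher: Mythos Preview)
Your proposal is correct and follows essentially the same route as the paper: reparametrize to obtain a family $G^s$ with $\varphi_{G^s}=\varphi^s_G$, form $\tilde G^s$, verify the support and fixed-point hypotheses for $\psi^k$, and invoke Lemma~\ref{lem:spec}. One small inaccuracy: $\tilde G^1$ is not \emph{precisely} the concatenation in the statement (since $G^1$ is only a reparametrization of $G$); the paper closes this gap via Homotopy invariance~\eqref{eq:normal}, and you should say so explicitly rather than asserting equality.
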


\begin{proof}
    We will be applying Lemma \ref{lem:spec}.
    Fix $k \in \N$ and $F$ with $\varphi_F=\psi^k$. Set $\tilde{G}=G\sharp G \circ\psi^{-1} \sharp \cdots \sharp G\circ\psi^{-(k-1)} $. Note that
\[
\supp(\tilde{G}) = \bigcup_{i=0}^{k-1} \psi^{k} (\supp (G)).
\]
    Since $\supp (G) \subset M \setminus \Per_c(\psi)$ we have $\supp(\tilde{G}) \subset M \setminus \Fix_c (\psi^k)$ as well. 
    We next show that $\Fix_c(\varphi_{\tilde{G}} \psi^k)=\Fix_c(\psi^k)$.  Note that once this is established the result follows immediately from Lemma \ref{lem:spec}.
    
    We claim that the Hamiltonian diffeomorphism $\varphi_{\tilde{G}}$ has following form
\begin{equation}\label{eq:form_tildeG}
\varphi_{\tilde{G}} = \varphi_{G} \circ \left(\psi \varphi_{G} \psi^{-1} \right) \circ \left( \psi^2 \varphi_{G} \psi^{-2} \right) \circ \cdots  \circ \left(\psi^{k-1} \varphi_{G} \psi^{-(k-1)}\right).
\end{equation}
To see why the above is true, first note that, by Section \ref{sec:hamiltonians}, 
$$\varphi_{\tilde{G}} = \varphi_{G} \circ \varphi_{G\circ \psi^{-1}} \circ \cdots \circ  \varphi_{G\circ \psi^{-(k-1)}}.$$ 
Now recall from Section \ref{sec:hamiltonians} that  $\varphi_{G\circ \psi^{-i}} =\psi^{i} \varphi_{G}\psi^{-i}$; \eqref{eq:form_tildeG} follows immediately.  

As a consequence of \eqref{eq:form_tildeG} we have
\[
\varphi_{\tilde{G}} \psi^k = (\varphi_{G} \psi)^k
\]
and hence, since we are supposing that  $\Per_c(\varphi_{G} \psi)=\Per_c(\psi)$, we get $\Fix_c(\varphi_{\tilde{G}} \psi^k)=\Fix_c(\psi^k)$.  This completes the proof of the lemma. 
\end{proof}

\begin{theorem}
\label{thm:main}
    Let $\psi \in \Ham(M, \omega)$ and $G \colon \R/ \Z \times M \to \R$ be a non-negative Hamiltonian with $\supp (G) \subset M \setminus \Per_c(\psi)$. Suppose that $\Per_c(\varphi_{G} \psi)=\Per_c(\psi)$. Then, for all $k \in \N$ we have 
      $$c(G) \leq \gamma(\psi^k).$$
\end{theorem}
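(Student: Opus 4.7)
The strategy is to apply Lemma \ref{lem:conj} and then manipulate the resulting identity by concatenating with an ``inverse'' Hamiltonian, using the triangle inequality, homotopy invariance, and monotonicity (this last step being where $G \ge 0$ enters) to extract the bound on $c(G)$.

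Fix any Hamiltonian $F$ with $\varphi_F = \psi^k$ and set
\[
H := G\circ\psi^{-1} \sharp \cdots \sharp G\circ\psi^{-(k-1)} \sharp F.
\]
Lemma \ref{lem:conj} then reads $c(F) = c(G \sharp H)$. The crucial trick is to isolate $c(G)$ by concatenating further with $\bar H$: since $\tV_{G \sharp H \sharp \bar H} = \tV_G$ and $\Delta$ is additive under $\sharp$ with $\Delta(\bar H) = -\Delta(H)$, homotopy invariance \eqref{eq:normal} yields $c(G) = c(G \sharp H \sharp \bar H)$. The triangle inequality \eqref{eq:tri} then gives
\[
c(G) \le c(G \sharp H) + c(\bar H) = c(F) + c(\bar H).
\]
(Note that applying the triangle inequality directly to $c(G\sharp H)=c(F)$ only produces $c(G) \ge c(F) - c(H)$, the wrong direction; the detour through $\bar H$ is what reverses the inequality.)

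It remains to show $c(\bar H) \le c(\bar F)$. Using that inversion reverses the order of a concatenation, homotopy invariance identifies $c(\bar H)$ with $c(\bar F \sharp \overline{G\circ\psi^{-(k-1)}} \sharp \cdots \sharp \overline{G\circ\psi^{-1}})$, and the triangle inequality bounds this by $c(\bar F) + \sum_{i=1}^{k-1} c(\overline{G\circ\psi^{-i}})$. The hypothesis $G \ge 0$ enters exactly here: each $G\circ\psi^{-i} \ge 0$, hence $\overline{G\circ\psi^{-i}} \le 0$, so monotonicity \eqref{eq:monotone} gives $c(\overline{G\circ\psi^{-i}}) \le c(0) = 0$. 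Therefore $c(\bar H) \le c(\bar F)$ and
\[
c(G) \le c(F) + c(\bar F) = \gamma(F).
\]
Taking the infimum over all $F$ with $\varphi_F = \psi^k$ (recalling the definition \eqref{eq:spec-norm}) yields $c(G) \le \gamma(\psi^k)$, as desired.

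The main delicate point is the bookkeeping around homotopy invariance: one must verify at each rewriting that the lifts to $\tHam$ agree and that the averages $\Delta$ match. Both conditions are routine consequences of the additivity of $\tV$ and $\Delta$ under $\sharp$ together with $\tV_{\bar K} = \tV_K^{-1}$ and $\Delta(\bar K) = -\Delta(K)$. Apart from this administrative check, the proof is a short manipulation of the axioms for $c$ once Lemma \ref{lem:conj} is in hand.
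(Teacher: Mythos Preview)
Your proof is correct and follows the same overall strategy as the paper's: apply Lemma \ref{lem:conj}, concatenate with an inverse Hamiltonian, and combine the triangle inequality, homotopy invariance, and monotonicity (from $G\ge 0$) to extract the bound $c(G)\le\gamma(F)$, then infimize over $F$.

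The only tactical difference is in where monotonicity enters. The paper concatenates with $\bar F$ alone, uses homotopy invariance to cancel $F\sharp\bar F$ and reduce to $c(G\sharp G\circ\psi^{-1}\sharp\cdots\sharp G\circ\psi^{-(k-1)})$, and then compares this with $c(G)$ by a single pointwise inequality: it reparametrizes $G$ to a Hamiltonian $G'$ supported on $[0,1/k]$ (padded by zero elsewhere), so that $G'\le G\sharp G\circ\psi^{-1}\sharp\cdots\sharp G\circ\psi^{-(k-1)}$ pointwise. You instead concatenate with the full $\bar H$, split via the triangle inequality, and bound each term $c(\overline{G\circ\psi^{-i}})\le c(0)=0$ separately. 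Your route avoids the reparametrization trick at the cost of one extra appeal to homotopy invariance and several applications of the triangle inequality; the paper's route uses one clean pointwise comparison. Both are equally valid and of comparable length.
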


\begin{proof}
    Fix any $k \in \N$ and any $F$ satisfying $\varphi_F=\psi^k$.  We will show that $$c(G) \leq \gamma(F) .$$  This proves the theorem because, by definition, $\gamma(\psi^k) = \inf \{\gamma(F): \varphi_F = \psi^k \}.$ By Lemma \ref{lem:conj}, we have
\[
c(F)= c (G\sharp G\circ \psi^{-1} \sharp \cdots \sharp G \circ \psi^{-(k-1)} \sharp F).
\]
Next we use Triangle inequality \eqref{eq:tri} to write
\[
c(G\sharp G \circ \psi^{-1} \sharp \cdots \sharp G \circ \psi^{-(k-1)} \sharp F \sharp \bar{F}) \leq c(F) + c(\bar{F})=\gamma (F). 
\]
To finish the proof, it suffices to show that
\[
c(G) \leq c(G\sharp G \circ \psi^{-1} \sharp \cdots \sharp G \circ \psi^{-(k-1)} \sharp F \sharp \bar{F}).
\]

Using Homotopy invariance \eqref{eq:normal} we cancel $F\sharp \bar{F}$ from right hand side:
\[
c(G\sharp G\circ \psi^{-1} \sharp \cdots \sharp G\circ\psi^{-(k-1)} \sharp F \sharp \bar{F})=c(G\sharp G \circ \psi^{-1} \sharp \cdots \sharp G \circ \psi^{-(k-1)}). 
\]
This can be done because, as mentioned in Remark \ref{rk:conca}, the two Hamiltonians $G\sharp G \circ \psi^{-1} \sharp \cdots \sharp G \circ \psi^{-(k-1)} $ and $G\sharp G\circ \psi^{-1} \sharp \cdots \sharp G\circ\psi^{-(k-1)} \sharp F \sharp \bar{F}$ generate same element in  $\tHam$ and, moreover, $ \Delta(G\sharp G \circ \psi^{-1} \sharp \cdots \sharp G \circ \psi^{-(k-1)})=\Delta(G\sharp G\circ \psi^{-1} \sharp \cdots \sharp G\circ\psi^{-(k-1)} \sharp F \sharp \bar{F}).$

As the final step let us modify $G$. Set 
$G'(t,x)=kG(kt, x)$ for $t\in [0,1/k]$ and $G'(t,x)=0$ otherwise. By Homotopy invariance \eqref{eq:normal}, we have $c(G)= c(G')$ and 
\[
G'\leq G\sharp G \circ \psi^{-1} \sharp \cdots \sharp G\circ \psi^{-(k-1)}.
\]
For the latter we used the non-negativity of $G$. Using Monotonicity \eqref{eq:monotone} we conclude that
\[
c(G) \leq c(G\sharp G\circ \psi^{-1} \sharp \cdots \sharp G\circ \psi^{-(k-1)}).
\]
This completes the proof of the theorem.
\end{proof}

Next using Theorem \ref{thm:main} we prove Theorem \ref{thm:closing}. 

\begin{proof}[Proof of Theorem \ref{thm:closing}]
We first handle the non-negative case.  
Let $G \geq 0$ be a non-zero Hamiltonian with $\supp (G) \subset M \setminus \Per_c(\psi)$. For a contradiction, suppose that the composition $\varphi_{G} \psi$ has no periodic point passing through $\supp (G)$. By Theorem \ref{thm:main}, for all $k \in \N$, we have 
\[
c(G)\leq \gamma(\psi^k).
\]
     This implies that $c(G) \leq 0$ since $\psi$ is $\gamma$-rigid. Let us see that this is not possible.

     Let $H \geq 0$ be any non-negative Hamiltonian with $H \leq G$. Since $H \geq 0$, the inverse Hamiltonian $\bar{H} \leq 0$, see Section \ref{sec:hamiltonians}. We have, $\bar{H} \leq H \leq G$. Using Monotonicity \eqref{eq:monotone} we conclude that $c(\bar{H}) \leq c(H) \leq c(G) \leq 0$ and hence
\begin{equation}
\label{eq:zero}
    \gamma(H) =c(H) + c(\bar{H}) \leq 0. 
\end{equation}
    By the non-degeneracy of the $\gamma$-norm, see Section \ref{sec:spec}, the inequality  \eqref{eq:zero} implies that $\varphi_H = \Id$. This is a contradiction, since one can always find a Hamiltonian $0\leq H \leq G$ with $\varphi_H \neq \Id$.

Next we argue the non-positive case. Let $G \leq 0$ be a non-zero Hamiltonian. Observe that $\bar{G} \geq 0$ and $\supp (\bar{G}) = \supp (G)$ (see Section \ref{sec:hamiltonians}). Since the inverse map $\psi^{-1}$ is also $\gamma$-rigid, see Section \ref{sec:spec}, we conclude from the non-negative case that $\varphi_{\bar{G}} \psi^{-1}$ has a periodic point passing through $\supp (\bar{G})$.
In other words, there exists $x \in \supp (\bar{G})$ and $k \in \N$ such that  $(\varphi_{\bar{G}} \psi^{-1})^k(x)=x$. Set $y= (\varphi_{\bar{G}})^{-1} (x) \in \supp (\bar{G})$. We have $(\varphi_{\bar{G}})^{-1}(\varphi_{\bar{G}} \psi^{-1})^{-k} \varphi_{\bar{G}} (y) =y$  and
\begin{align*}
     (\varphi_{\bar{G}})^{-1}  (\varphi_{\bar{G}} \psi^{-1})^{-k} \varphi_{\bar{G}} = (\varphi_{\bar{G}})^{-1}  (\psi (\varphi_{\bar{G}})^{-1})^{k} \varphi_{\bar{G}}  
     = ((\varphi_{\bar{G}})^{-1} \psi )^k.
\end{align*}
Hence $y \in \supp(\bar{G})=\supp(G)$ is a $k$-periodic point of $(\varphi_{\bar{G}})^{-1} \psi =\varphi_{G} \psi$  (see Section \ref{sec:hamiltonians}). This completes the proof of the theorem. 
\end{proof}

\section{The strong closing property for ellipsoids}\label{sec:ellipsoids}
As mentioned in the introduction, it is possible to prove the strong closing property for ellipsoids from the strong closing property for rotations of $(\C P^n, \omega_{FS})$. Here, we briefly outline this.   

  As noted by Xue \cite{Xue}, one can recast the closing property for ellipsoids as a closing property for quadratic Hamiltonians in $\C^n$.  We begin by explaining this.    Let $E\subset \C^n$ be an ellipsoid; up to a linear  symplectomorphism it can be written in the form $E= Q^{-1}(1)$ where 
\begin{equation}\label{eq:quadratic-ellipsoid}
    Q(z_1, \ldots, z_n)= \sum_{i=1}^n a_i \vert z_i \vert^2.
\end{equation}
Consider the 1-form $\lambda = \frac{1}{2} \sum_{i=1}^n y_i dx_i -x_i dy_i$, where $x_i, y_i$ are the real and imaginary parts of $z_i$.  It induces a contact form on $E$ whose Reeb\footnote{The Reeb flow of $\alpha$ is the flow of the vector field $R$ satisfying the following two conditions : $d\alpha(R, \cdot) =0 $ and $\alpha(R) =1$. } flow coincides, up to reparametrization, with the Hamiltonian flow of $Q$ on $E$.  A small perturbation of the contact form induces a perturbed Reeb flow which can be seen as a perturbation of the Hamiltonian $Q$.  The strong closing property for ellipsoids, proven in \cite{CDPT}, states that for for every non-zero function $g: E \to \R_{\ge 0}$  (or $g: E \to \R_{\le 0}$) there exists $s\in[0, 1]$ such that the  contact form $(1+sg) \lambda$ (on $E$) has a closed Reeb orbit passing through the support of $g$.  For any choice of $g$ as above one can find a compactly supported function $f: \C^n \to \R$ such that the Reeb flow of $(1+sg) \lambda$ corresponds to the Hamiltonian flow on some level set of $Q+f$ and vice versa.  Hence, to deduce the strong closing property for ellipsoids, it is sufficient to prove  the following statement.   

\begin{proposition}\label{prop:reeb}
    Consider a compactly supported non-zero function $f: \C^n \rightarrow \R_{\ge 0}$ (or  $f: \C^n \rightarrow \R_{\le 0}$).  The Hamiltonian flow of $Q+f$ has a periodic orbit passing through the support of $f$.
\end{proposition}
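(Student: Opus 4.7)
The plan is to transport the problem from $(\C^n, \omega_{\text{std}})$ to $(\C P^n, \omega_{FS})$ and apply Theorem \ref{rmk:cpn} to a rotation-type pseudo-rotation on $\C P^n$. Introducing action-angle coordinates $I_i = |z_i|^2/2$, $\theta_i = \arg z_i$ on $\C^n$, and moment map coordinates $\mu_i = |w_i|^2/(1+|w|^2)$ with the angles $\theta_i = \arg w_i$ on an affine chart of $\C P^n$, both symplectic forms take the form $\sum dI_i \wedge d\theta_i$ and $\sum d\mu_i \wedge d\theta_i$ respectively. The identification $\mu = I$ therefore yields a symplectomorphism $\iota$ from an open ball $B^{2n}(r) \subset (\C^n, \omega_{\text{std}})$ onto the complement of the hyperplane at infinity in $(\C P^n, \omega_{FS})$; after rescaling we may take $Q_{\C P^n}$ to be the toric rotation Hamiltonian on $\C P^n$ with $\iota^* Q_{\C P^n} = Q$. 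Perturbing the frequencies $a_i$ to nearby rationally independent values (and taking a limit at the end), $\psi := \varphi^1_{Q_{\C P^n}}$ is a pseudo-rotation of $\C P^n$ with exactly $n+1$ periodic points.

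Using the symplectic dilation $\Sigma_c(z) = cz$, which conjugates $\varphi^t_{Q+f}$ to $\varphi^t_{Q + c^{-2} f(c\,\cdot)}$, we may assume, after rescaling, that $\tilde f := c^{-2} f(c \,\cdot)$ is supported in $B^{2n}(r)$. Then $G := \iota_* \tilde f$ is a compactly supported, non-negative, non-zero Hamiltonian on $\C P^n$ satisfying $\iota^*(Q_{\C P^n} + G) = Q + \tilde f$ on $\iota^{-1}(\supp G)$. Since $G$ is supported away from the hyperplane at infinity and $\varphi^t_{Q_{\C P^n}}$ preserves the moment map level sets, any closed orbit of $\varphi^t_{Q_{\C P^n} + G}$ through $\supp G$ stays in a compact part of $\iota(B^{2n}(r))$ and hence corresponds, via $\iota$ and $\Sigma_c$, to a closed orbit of $\varphi^t_{Q + f}$ through $\supp f$. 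It therefore suffices to produce a closed orbit of the autonomous flow of $Q_{\C P^n} + G$ on $\C P^n$ passing through $\supp G$.

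This is where the main obstacle arises. Theorem \ref{rmk:cpn} applied to $(\psi, G)$ yields, for every $s \in (0, 1]$, a periodic point of $\varphi^s_G \psi = \varphi^1_{(sG) \sharp Q_{\C P^n}}$ in $\supp G$, but this is a periodic orbit of the time-one map of the time-dependent concatenated Hamiltonian $(sG) \sharp Q_{\C P^n}$ rather than of the autonomous flow of $Q_{\C P^n} + sG$ that we need. To close the gap I would re-run the proof of Theorem \ref{rmk:cpn} with $\varphi^t_{Q_{\C P^n} + sG}$ in place of $\varphi^s_G \psi$: the cancellation of Lemma \ref{lem:spec2}, the iteration trick of Lemma \ref{lem:conj2}, and the contradiction argument of Theorem \ref{thm:main} rest only on spectral-invariant properties (Spectrality, Monotonicity, Homotopy invariance, Triangle inequality) and Shelukhin's splitting of Floer homology for the pseudo-rotation $\psi$, all of which are equally available in the autonomous setting. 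The hardest step is the autonomous analogue of Lemma \ref{lem:spec2}: one must show that if $\varphi^t_{Q_{\C P^n} + sG}$ has no periodic orbit meeting $\supp G$ for any $t > 0$, then $c(Q_{\C P^n} + sG) = c(Q_{\C P^n})$. Once this is established, the $\gamma$-rigidity of $\psi$ together with the non-negativity of $G$ furnishes the desired closed orbit exactly as in Theorem \ref{thm:main}.
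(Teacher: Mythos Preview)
Your overall reduction to $(\C P^n,\omega_{FS})$ is correct and matches the paper. You also correctly pinpoint the obstacle: Theorem~\ref{rmk:cpn} produces periodic points of the composition $\varphi^s_G\psi$, which is the time-one map of a \emph{time-dependent} Hamiltonian, not of the autonomous flow of $Q_{\C P^n}+G$. But your proposed fix---re-running Lemmas~\ref{lem:spec2}, \ref{lem:conj2} and Theorem~\ref{thm:main} in an autonomous setting---is left incomplete: you explicitly flag ``the hardest step'' (the autonomous analogue of Lemma~\ref{lem:spec2}) and do not prove it. That step is precisely where the content lies, so as written the proposal has a genuine gap. A secondary gap is the perturbation of the $a_i$ to rationally independent values ``taking a limit at the end'': periodic orbits need not survive such limits, and you give no compactness argument.

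The paper closes the main gap with a one-line algebraic trick that makes any autonomous re-run unnecessary. Instead of feeding the autonomous $G=f$ into Theorem~\ref{rmk:cpn}, it uses the \emph{time-dependent} Hamiltonian
\[
G(t,x):=f\bigl(\varphi^t_{Q_{\C P^n}}(x)\bigr)\ \ge 0,
\]
obtained from the factorization $\varphi^1_{Q_{\C P^n}+f}=\varphi_{Q_{\C P^n}}\varphi_G$ via the product $H\#F(t,x):=H(t,x)+F(t,\varphi^{-t}_H(x))$ (so $G=\overline{Q_{\C P^n}}\#(Q_{\C P^n}+f)$). Theorem~\ref{rmk:cpn} now applies verbatim with $\psi=\varphi_{Q_{\C P^n}}$ and this $G$, yielding a periodic point of $\varphi_G\psi$---hence, up to conjugation, of the time-one map $\varphi^1_{Q_{\C P^n}+f}$---in $\supp(G)=\bigcup_{t\in[0,1]}\varphi^{-t}_{Q_{\C P^n}}(\supp f)$. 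Since $Q_{\C P^n}+f$ is autonomous, a periodic point of its time-one map lies on a closed orbit of its flow. Finally, any closed orbit of $Q_{\C P^n}+f$ that meets $\supp(G)$ but avoids $\supp(f)$ is actually a closed $Q_{\C P^n}$-orbit through $\supp(G)$, and by the description of $\supp(G)$ such an orbit must meet $\supp(f)$. Thus the obstacle you identified dissolves once $G$ is chosen correctly, without touching the internals of the proof of Theorem~\ref{rmk:cpn}.
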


Next we outline how the above proposition follows from Theorem \ref{thm:closing}.

We think of $(\C P^n, \omega_{FS})$ as the symplectic quotient of the unit sphere $ (S^{2n+1}, \omega_{n+1}  )  \subset \C^{n+1}$, where $\omega_{n+1}$ is the standard symplectic form on  $\C^{n+1}$, by the diagonal $S^1$ action 
 $$ t \cdot (z_0, z_1, \ldots, z_n) := (e^{2\pi i t} z_0, e^{2\pi i t} z_1, \ldots, e^{2\pi i t} z_n).$$

 Now, we view the quadratic Hamiltonian $Q$, defined by \eqref{eq:quadratic-ellipsoid}, as a Hamiltonian on $\C^{n+1}$, equipped with coordinates $(z_0, z_1, \ldots, z_n)$, which does not depend on $z_0$.  The restriction of $Q$ to the sphere $S^{2n+1}$ is invariant under the above circle action and so defines a (quadratic) Hamiltonian on $\C P^n$ which we denote by  $Q_{\C P^n}$.  The associated Hamiltonian diffeomorphism $\varphi_{Q_{\C P^n}}$ is a rotation of $(\C P^n, \omega_{FS})$.  Such maps  are $\gamma$-rigid (see\cite{Ginzburg-Gurel18a, Ginzburg-Gurel19}) and hence satisfy the strong closing property of Theorem \ref{thm:closing}.

There is a natural symplectic embedding $i: (B, \omega_n) \hookrightarrow  (\C P^n, \omega_{FS})$ of the unit ball 
$B \subset \C^n$ given by 
\[  
(z_1, \ldots, z_n) \mapsto \left[z_0:= \sqrt{1 - \sum \vert z_i \vert^2}: z_1: \ldots: z_n \right];
\]
see, for example, \cite{Karshon}. Observe that
\begin{equation*}
\label{eq:cpn-cn}
    Q_{\C P^n} \circ i = Q.
\end{equation*}

Up to rescaling the ellipsoid $E$, and multiplying $Q$ by a constant, we may suppose that $E = Q^{-1}(1)$ is contained in the interior of $B$.  Similarly, we may assume that $f$ is also supported in the interior of $B$.  Consequently, to prove Proposition \ref{prop:reeb}, it is sufficient to show that for any non-zero function $f: \C P^n \rightarrow \R_{\ge 0}$ (or  $f: \C P^n \rightarrow \R_{\le 0}$), the Hamiltonian flow of $Q_{\C P^n}+f$ has a periodic orbit passing through the support of $f$.

\begin{lemma}
\label{lemma:reeb}
Theorem \ref{thm:closing} implies that the Hamiltonian flow of ${Q_{\C P^n}+f}$ has a periodic orbit passing through the support of $f$.
\end{lemma}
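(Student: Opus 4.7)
The plan is to apply Theorem \ref{rmk:cpn} to the rotation $\psi := \varphi^1_{Q_{\C P^n}}$ and to a time-dependent Hamiltonian $G$ derived from $f$, and then translate the resulting periodic point back into a periodic orbit of the autonomous flow of $Q_{\C P^n}+f$.

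I would begin from the standard decomposition $\varphi^t_{Q_{\C P^n}+f} = \varphi^t_{Q_{\C P^n}} \circ \phi^t$, where $\phi^t$ is the flow of the time-dependent Hamiltonian $G_t := f \circ \varphi^t_{Q_{\C P^n}}$ (verified by differentiating both sides in $t$ and using that $\varphi^t_{Q_{\C P^n}}$ is a symplectomorphism). At $t=1$ this gives
\[
\varphi^1_{Q_{\C P^n}+f} = \psi \cdot \varphi^1_G.
\]
Here $\psi$ is a rotation of $\C P^n$, hence a $\gamma$-rigid pseudo-rotation, and $G$ inherits the sign of $f$, so for non-zero $f\ge 0$, the Hamiltonian $G$ is non-zero and non-negative (after a routine time-reparametrization to make it vanish near $t=0,1$, which does not change its time-$1$ map). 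Theorem \ref{rmk:cpn} with $s=1$ then supplies a periodic point $p \in \supp(G)$ of $\varphi^1_G \psi$. Since $\varphi^1_G \psi = \psi^{-1}\,\varphi^1_{Q_{\C P^n}+f}\,\psi$, the conjugated point $y_0 := \psi(p)$ is a $k$-periodic point of $\varphi^1_{Q_{\C P^n}+f}$ for some $k \ge 1$.

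It remains to show that the continuous orbit $\gamma(t):=\varphi^t_{Q_{\C P^n}+f}(y_0)$, $t\in[0,k]$, intersects $\supp(f)$. From $p\in\supp(G)$, a straightforward continuity and compactness argument extracts $t_0\in[0,1]$ and $q\in\supp(f)$ with $q=\varphi^{t_0}_{Q_{\C P^n}}(p)$, so that $y_0=\varphi^{1-t_0}_{Q_{\C P^n}}(q)$. Suppose, for contradiction, that $\gamma$ avoids $\supp(f)$. Then $\gamma$ lies entirely in the open set $\C P^n\setminus\supp(f)$, on which $f\equiv 0$ and hence $X_f\equiv 0$, so $X_{Q_{\C P^n}+f}=X_{Q_{\C P^n}}$ along $\gamma$. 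Uniqueness of ODE solutions then forces $\gamma(t)=\varphi^t_{Q_{\C P^n}}(y_0)$ for all $t\in[0,k]$, and the identity $\varphi^k_{Q_{\C P^n}+f}(y_0)=y_0$ upgrades to $\varphi^k_{Q_{\C P^n}}(y_0)=y_0$. Using this periodicity,
\[
q = \varphi^{t_0-1}_{Q_{\C P^n}}(y_0) = \varphi^{t_0-1+k}_{Q_{\C P^n}}(y_0) = \gamma(t_0-1+k),
\]
and $t_0-1+k\in[0,k]$ since $k\ge 1$, which places $q\in\supp(f)$ on the orbit $\gamma$, a contradiction.

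The case $f\le 0$ follows by the identical argument using the non-positive version of Theorem \ref{rmk:cpn}. The main obstacle is the last dichotomy: one must show that the continuous $Q_{\C P^n}+f$-orbit of $y_0$ either meets $\supp(f)$ directly or collapses to a $Q_{\C P^n}$-orbit, which is already incompatible with the identity $y_0=\varphi^{1-t_0}_{Q_{\C P^n}}(q)$ with $q\in\supp(f)$.
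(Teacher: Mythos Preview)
Your proof is correct and follows essentially the same route as the paper's: both introduce the time-dependent Hamiltonian $G_t = f\circ\varphi^t_{Q_{\C P^n}}$ to factor $\varphi^1_{Q_{\C P^n}+f} = \psi\,\varphi^1_G$, apply Theorem~\ref{rmk:cpn} to the rotation $\psi$, and then argue that the resulting closed orbit must meet $\supp(f)$ because an orbit of $Q_{\C P^n}+f$ that avoids $\supp(f)$ is a $Q_{\C P^n}$-orbit and hence cannot pass through $\supp(G)=\bigcup_{t\in[0,1]}\varphi^{-t}_{Q_{\C P^n}}(\supp(f))$ without hitting $\supp(f)$. Your treatment of the conjugation $\varphi^1_G\psi = \psi^{-1}\varphi^1_{Q_{\C P^n}+f}\psi$ and the explicit contradiction via $q=\gamma(t_0-1+k)$ spell out steps the paper leaves terse, but the idea is identical.
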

Note that this is not an immediate consequence of Theorem \ref{thm:closing} because of difference between $\sharp$ and $+$.
\begin{proof}
    To simplicity our notation, throughout this proof we set $Q =Q_{\C P^n}$. 
    
    Let $G = \bar{Q}\# (Q+ f)$ where $H\#F(t, x):= H(t,x) + F(t, \varphi^{-t}_H(x))$; the flow of  $H\#F$ is given $\varphi^t_{H\#F}= \varphi^t_H \circ \varphi^t_F.$ We have
\begin{align*}
    G (t,x)  & = -Q(\varphi^t_Q(x)) + Q( \varphi^t_Q(x)) + f( \varphi^t_Q(x))\\
     & = f( \varphi^t_Q(x)) \ge 0.
\end{align*}

Observe that
\begin{equation} \label{eqn:support-G}
    \supp(G) =  \bigcup_{t=0}^1 \varphi^{-t}_{Q}(\supp (f)).
\end{equation}

Note that $\varphi_{Q+f} = \varphi_Q \varphi_G$ and so we can apply Theorem \ref{thm:closing} to conclude  that $\varphi_{Q+f}$ has  a periodic point passing through $\supp(G)$. Consequently the Hamiltonian flow of the $Q+f$  has a closed orbit passing through $\supp(G)$.

It remains to show that the closed orbit passes through the set $\supp(f)$.  This can be deduced from \eqref{eqn:support-G} as follows.  Any orbit  of $Q+f$ which does not pass through the support of $f$ is actually an orbit of $Q$.  By \eqref{eqn:support-G}, any closed orbit of $Q$ that passes through $\supp(G)$ must also pass through $\supp(f)$.
\end{proof}


 \bibliographystyle{alpha}
 \bibliography{biblio}

{\small

\medskip
 \noindent Erman \c C\. inel\. i\\
\noindent Sorbonne Universit\'e and Universit\'e de Paris, CNRS, IMJ-PRG, F-75006 Paris, France.\\
 {\it e-mail:} erman.cineli@imj-prg.fr

\medskip
 \noindent Sobhan Seyfaddini\\
\noindent Sorbonne Universit\'e and Universit\'e de Paris, CNRS, IMJ-PRG, F-75006 Paris, France.\\
 {\it e-mail:} sobhan.seyfaddini@imj-prg.fr
 
}

\end{document}